\newcommand{\Rook}{\raisebox{-7pt}{\BlackRookOnWhite}}
\newcommand{\ZZ}{\mathbb{Z}}
\newcommand{\op}{\operatorname}
\newcommand{\inv}{\op{inv}}
\newcommand{\defn}{\textbf}
\newtheorem{proposition}{Proposition}
\theoremstyle{definition}
\newtheorem{remark}[proposition]{Remark}
\newtheorem{example}[proposition]{Example}
\title{Garsia--Remmel $q$-rook numbers \\ are not always unimodal}
\author[1]{Joel Brewster Lewis}
\author[2]{Alejandro H. Morales\footnote{corresponding author}}
\affil[1]{Department of Mathematics, George Washington University\\ \texttt{jblewis@gwu.edu}}
\affil[2]{LACIM, D\'epartement de Mathématiques, Universit\'e du Qu\'ebec \`a Montr\'eal\\ \texttt{ahmorales.math@gmail.com}} 
\date{September 2025}
\begin{document}

\maketitle

\begin{center}
{\em Dedicated to the memory of Adriano Garsia.}
\end{center}

\begin{abstract}
We show by an explicit example that the Garsia--Remmel $q$-rook numbers of Ferrers boards do not all have unimodal sequences of coefficients.  This resolves in the negative a question from 1986 by the aforementioned authors. 
\end{abstract}

\begin{center}
{\bf keywords:} $q$-rook numbers, rook placements, Ferrers boards, unimodality, counterexamples, q-Stirling numbers
\end{center}

\bigskip

The central objects in rook theory are the \defn{placements of (nonattacking) rooks} on a \defn{board}.  Formally, a board $B$ is a finite subset of $\ZZ_{> 0} \times \ZZ_{> 0}$, and a rook placement on $B$ is a subset $w$ of $B$ such that no two elements of $w$ agree on either coordinate.  The terminology comes from the natural visual representation of these objects, as in Figure~\ref{fig:a rook placement}.  The \defn{rook number} $R_r(B)$ is defined as the number of placements of $r$ rooks on the board $B$.

\begin{figure}[hbt]
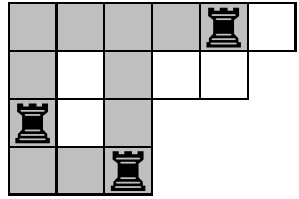

   \[ \begin{ytableau}
*(lightgray) & *(lightgray)& *(lightgray)& *(lightgray)& *(lightgray)\Rook& \\
*(lightgray)& &*(lightgray) & & \\
*(lightgray)\Rook & &*(lightgray) \\
*(lightgray)&*(lightgray) &*(lightgray) \Rook \\
\end{ytableau}
\]
    \caption{The rook placement $\{(1, 5), (3, 1), (4, 3)\}$ on the board $B_{\langle 6, 5, 3, 3\rangle}$ has inversion number $5$.  (Here we use matrix coordinates.)}
    \label{fig:a rook placement}
\end{figure}

A case of particular interest (beginning in the first systematic study \cite{KaplanskyRiordan} of the field) is when $B = B_\lambda$ is a \defn{Ferrers board} associated to an \defn{integer partition} $\lambda$: that is, $\lambda = \langle \lambda_1, \ldots, \lambda_\ell\rangle$ is a finite nonincreasing sequence of positive integers and $B_\lambda := \{ (i, j) : 1 \leq i \leq \ell, 1 \leq j \leq \lambda_i\}$.  For example, in the case of the \defn{staircase partition} $\delta_n := \langle n - 1, n-2, \ldots, 2, 1\rangle$, one has that
\[
R_k(B_{\delta_{n}}) = S(n, n - k)
\]
is a Stirling number of the second kind \cite[\S 5]{KaplanskyRiordan}.  In general, two partitions $\lambda, \mu$ are said to be \defn{rook equivalent} if $R_k(B_\lambda) = R_k(B_\mu)$ for all $k$ \cite{GJW1}. This condition can be phrased in various ways; for our purposes, the most convenient characterization \cite[Cor.~3.2]{cotardo2023diagonalsferrersdiagram} is that $\lambda$ and $\mu$ are rook equivalent if and only if $d_m(\lambda) = d_m(\mu)$ for all $m$, where the $d_m$ are the \defn{diagonal counts}
\begin{equation}\label{eq:diagonals}
    d_m(\nu) := | \{(i, j) \in B_\nu : i + j = m + 1\}|.
\end{equation}

In \cite{GarsiaRemmel}, Garsia and Remmel introduced a $q$-analogue of the rook number.  This analogue is in terms of the \defn{inversion number}, defined as follows: given a rook placement $w$ on a Ferrers board $B_\lambda$, for each rook $(a, b) \in w$, delete from $B_\lambda$ all elements $(i, b)$ with $i \leq a$ and $(a, j)$ with $j \leq b$; the inversion number $\inv(w)$ is the number of undeleted cells.  Then the \defn{(Garsia--Remmel) $q$-rook number} of \defn{rank} $k$ is
\[
R_k(\lambda; q) := \sum_{\substack{w \text{ a placement of} \\ \text{$k$ rooks on } B_\lambda}} q^{\inv(w)}.
\]
By definition, $R_k(\lambda; q)$ is a polynomial in the formal variable $q$ with nonnegative integer coefficients such that $R_k(\lambda; 1) = R_k(B_\lambda)$.  For example, the rank-$3$ $q$-rook number of the partition $\langle 6, 5, 3, 3\rangle$ is $q^{11} + 4q^{10} + 10q^9 + 19q^8 + 27q^7 + 30q^6 + 25q^5 + 15q^4 + 6q^3 + q^2$, and the rook placement in Figure~\ref{fig:a rook placement} is one of the $25$ that contribute to the coefficient of $q^5$.
  
Analogously to the case $q = 1$, one says two partitions are \defn{$q$-rook equivalent} if they have the same $q$-rook numbers. Surprisingly, two partitions are rook equivalent if and only if they are $q$-rook equivalent \cite[p.~257]{GarsiaRemmel}. In the particular case $\lambda = \delta_n$, the polynomial $R_k(\lambda; q)$ is a \emph{$q$-Stirling number of the second kind} 
\cite[Eq.~I.9]{GarsiaRemmel}
; for more on these numbers, see \cite{MILNE1982173,WACHS199127} and Remarks~\ref{rem:q stir 1} and~\ref{rem:q stir 2} below.

The same polynomials $R_k(\lambda; q)$ also arise in enumerative linear algebra, when counting matrices of rank $k$ over a finite field whose nonzero entries all belong to $B_\lambda$ \cite[Thm.~1]{Haglund}; this connection has led to applications in coding theory, since up to a power of $q$, the $R_k(\lambda; q)$ give the weights of certain rank-metric codes (e.g., see \cite{Ravagnani,Gluesing-Luerssen_Ravagnani,ALCO_2024__7_2_555_0}). The $q$-rook numbers and the related {\em $q$-hit numbers} 
have also recently appeared in the expansions of {\em chromatic symmetric functions} of graphs corresponding to certain Dyck paths called {\em abelian} \cite{ABREU2021105407,COLMENAREJO2023103595,nadeau2021qdeformationalgebraklyachkomacdonalds,haglund2024alphachromaticsymmetricfunctions}.

Based on an assortment of empirical and circumstantial evidence, Garsia and Remmel conjectured \cite[p.~250]{GarsiaRemmel} that the polynomials $R_k(\lambda; q)$ always have \defn{unimodal sequence of coefficients}, i.e., that if
\[
R_k(\lambda; q) = a_N q^N + a_{N - 1} q^{N - 1} + \ldots + a_1 q + a_0
\]
then for some $m$ it holds that $a_N \leq a_{N - 1} \leq \cdots \leq a_{m + 1} \leq a_m \geq a_{m  - 1} \geq \cdots \geq a_1 \geq a_0$.
As we now show, it is not difficult to prove the Garsia--Remmel unimodality conjecture for the cases of the maximum and minimum nontrivial numbers of rooks.
\begin{proposition} \label{prop:full rank}
For every partition $\lambda = \langle \lambda_1, \ldots, \lambda_\ell\rangle$, $R_{\ell}(\lambda; q)$ is unimodal.
\end{proposition}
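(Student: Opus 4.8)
The plan is to reduce the statement to an explicit product formula for the top rook number and then to a classical closure property of unimodal sequences. First I would establish that
\[
R_\ell(\lambda; q) = \prod_{i=1}^{\ell} [\lambda_i - \ell + i]_q, \qquad \text{where } [m]_q := 1 + q + \cdots + q^{m-1}.
\]
If $R_\ell(\lambda; 1) = 0$ then $R_\ell(\lambda; q)$ is identically zero (its nonnegative coefficients sum to $0$) and the claim is trivial, so I may assume every factor $\lambda_i - \ell + i$ is a positive integer. To prove the formula I would build a placement of $\ell$ rooks one row at a time, from the shortest row $\ell$ up to the longest row $1$. When the rook in row $i$ is placed, the $\ell - i$ rooks already placed occupy $\ell - i$ distinct columns, all lying in $\{1, \ldots, \lambda_i\}$ because $\lambda$ is nonincreasing, so there are exactly $\lambda_i - (\ell - i)$ admissible columns, recovering the $q = 1$ count $\prod_i(\lambda_i - \ell + i)$. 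The content of the $q$-refinement is that, relative to this order, the contribution of row $i$ to $\inv(w)$ is independent of the other choices and ranges over $\{0, 1, \ldots, \lambda_i - \ell + i - 1\}$ as its rook runs through the admissible columns; summing the resulting geometric factors yields the product. This is essentially the Garsia--Remmel $q$-factorization theorem specialized to the maximal number of rooks, which I would either cite or verify by direct inspection of the inversion statistic.

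Next I would observe that each factor $[m]_q$ is \emph{palindromic} (its coefficient sequence reads the same in both directions) with all coefficients equal to $1$, hence trivially unimodal. The proposition then reduces to the multiplicative lemma: if $f$ is palindromic and unimodal with nonnegative coefficients, then so is $f \cdot [m]_q$. To prove the lemma I would use that the coefficients of $f \cdot [m]_q$ are the length-$m$ sliding-window sums $b_k = f_{k - m + 1} + \cdots + f_k$ of the coefficients of $f$; palindromicity of $(b_k)$ is immediate from that of $f$. For unimodality, note that $b_k - b_{k-1} = f_k - f_{k - m}$, and that for a symmetric unimodal $f$ centered at $c := (\deg f)/2$ the coefficient $f_j$ is a nonincreasing function of the distance $|j - c|$; hence $f_k \ge f_{k-m}$ exactly when $k$ is at least as close to $c$ as $k - m$ is, i.e.\ when $k \le c + m/2$. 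Since the center of $(b_k)$ is $c + (m-1)/2 < c + m/2$, this inequality holds throughout the left half of the support of $(b_k)$, so $(b_k)$ is nondecreasing up to its center and, by symmetry, nonincreasing afterward. Iterating the lemma over the $\ell$ factors $[\lambda_i - \ell + i]_q$ shows that $R_\ell(\lambda; q)$ is palindromic and unimodal.

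The main obstacle I anticipate is not the unimodality argument, which is routine once the product formula is in hand, but rather the inversion bookkeeping needed to prove that formula: one must check that, in the chosen row order, the statistic $\inv(w)$ splits as an independent sum of per-row contributions sweeping out a full set of consecutive exponents. If invoking the Garsia--Remmel factorization directly is preferred, this obstacle disappears entirely and the proof becomes short, with the symmetric-unimodal closure lemma doing the remaining work.
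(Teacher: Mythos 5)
Your proof is correct, and its first half coincides with the paper's: the product formula $R_\ell(\lambda;q) = \prod_{i=1}^{\ell} [\lambda_i - \ell + i]_q$ is exactly the paper's equation \eqref{eq:full rank}, derived by the same argument (place rooks row by row from row $\ell$ up; in row $i$ there are $\lambda_i - (\ell - i)$ admissible columns, and the per-row inversion contributions independently sweep $\{0, 1, \ldots, \lambda_i - \ell + i - 1\}$). You diverge only at the final step. The paper notes each factor $[m]_q$ is log-concave and invokes the classical closure of log-concavity (for nonnegative polynomials without internal zeros) under products, concluding that $R_\ell(\lambda;q)$ is log-concave and hence unimodal. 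You instead prove a self-contained closure lemma: multiplying a palindromic unimodal polynomial $f$ by $[m]_q$ preserves both properties, via the sliding-window identity $b_k - b_{k-1} = f_k - f_{k-m}$ and the fact that $f_j$ is nonincreasing in $|j - c|$; this argument is sound, with one small quibble that ``$f_k \ge f_{k-m}$ \emph{exactly} when $k$ is at least as close to $c$'' should be ``whenever'' (ties among coefficients can produce the inequality even when $k$ is farther from the center), though only the direction you actually use is needed. The trade-offs are worth noting: the paper's route is shorter but outsources the closure fact, and it yields the stronger conclusion that the full-rank $q$-rook number is log-concave --- a pointed strengthening, since the paper exhibits lower-rank $q$-rook numbers that fail log-concavity (e.g.\ $R_1(\langle 4,1\rangle;q)$); your route yields the complementary strengthening that $R_\ell(\lambda;q)$ is palindromic, and is entirely elementary. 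Your explicit treatment of the degenerate case $R_\ell(\lambda;1) = 0$ (when some factor $\lambda_i - \ell + i$ is nonpositive, so the polynomial vanishes identically) is a point where you are more careful than the paper, which implicitly assumes all factors are positive.
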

\begin{proof}
In the case of $\ell$ rooks, there must be one rook in each row.  Consider iteratively placing the rooks beginning with the last row.  Regardless of how one has placed rooks in rows $k + 1, \ldots, \ell$, there will be $\lambda_k - (\ell - k)$ legal positions to place a rook in row $k$, and the different choices will leave $0, 1, \ldots, \lambda_k - (\ell - k) - 1$ inversions in row $k$.  Consequently
\begin{equation}\label{eq:full rank}
R_{\ell}(\lambda; q) = [\lambda_{\ell}]_q \cdot [\lambda_{\ell - 1} - 1]_q \cdots [\lambda_1 - (\ell - 1)]_q 
\end{equation}
where $[n]_q := 1 + q + q^2 + \ldots + q^{n - 1}$. Thus $R_\ell(\lambda; q)$ is a product of log-concave polynomials (i.e., polynomials for which the coefficient sequence satisfies $a_i^2 \geq a_{i - 1}a_{i + 1}$) with nonnegative coefficients and no internal zeros.  The family of such polynomials is closed under products \cite[Prop.~2]{StanleyLogConcave}, and furthermore every such polynomial is unimodal \cite[p.~500]{StanleyLogConcave}.
\end{proof}

\begin{proposition}
For every partition $\lambda$, $R_1(\lambda;q)$ is unimodal.
\end{proposition}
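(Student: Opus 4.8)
The plan is to reduce the statement to the unimodality of the diagonal-count sequence $\bigl(d_m(\lambda)\bigr)_m$ from \eqref{eq:diagonals}, and then to prove that unimodality directly. First I would compute $\inv(w)$ when $w=\{(a,b)\}$ is a single-rook placement. The deleted cells are precisely the $a$ cells of column $b$ weakly above $(a,b)$ together with the $b$ cells of row $a$ weakly to its left --- all of these lie in $B_\lambda$ because $\lambda$ is a partition --- and the two sets meet only in $(a,b)$. Hence $\inv(w) = |\lambda| - (a+b-1)$, depending only on the antidiagonal index $m=a+b-1$ of the rook. Writing $N:=|\lambda|$ and grouping cells by $m$, this gives
\[
R_1(\lambda;q) = \sum_{(a,b)\in B_\lambda} q^{\,N-(a+b-1)} = \sum_{m\ge 1} d_m(\lambda)\, q^{\,N-m}.
\]
Thus the coefficient sequence of $R_1(\lambda;q)$ is, up to reversal, the diagonal-count sequence $\bigl(d_m(\lambda)\bigr)_m$; since reversal preserves unimodality, it suffices to show $\bigl(d_m(\lambda)\bigr)_m$ is unimodal.

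Next I would analyze this sequence through its first differences. Set $r_i := i+\lambda_i-1$, the antidiagonal on which row $i$ ends, and let $e(k):=\#\{i : r_i = k\}$ count the rows ending on antidiagonal $k$. A cell $(i,\,m+1-i)$ lies in $B_\lambda$ exactly when $i \le m \le r_i$, so counting gives $d_m(\lambda)=\min(m,\ell)-\#\{i : r_i \le m-1\}$, and telescoping yields the clean formula
\[
d_m(\lambda) - d_{m-1}(\lambda) = \mathbf{1}[m \le \ell] - e(m-1).
\]
In particular the sequence is automatically weakly decreasing once $m>\ell$, so unimodality can only fail through a strict decrease at some $m_1 \le \ell$ (forcing $e(m_1-1)\ge 2$, hence a row ending at $k_1:=m_1-1$) followed by a strict increase at some $m_2$ with $m_1 < m_2 \le \ell$ (forcing a gap $e(k_2)=0$ at $k_2:=m_2-1 \le \ell-1$).

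The crux, and the step I expect to be the main obstacle, is to rule out this configuration: a gap $e(k_2)=0$ with $k_2\le \ell-1$ cannot occur to the right of a row-ending at $k_1<k_2$. Here I would exploit two structural constraints on the endpoints that come from $\lambda$ being a partition, namely $r_{i+1}\le r_i+1$ (since $\lambda_{i+1}\le\lambda_i$) and $r_\ell \ge \ell$ (since $\lambda_\ell \ge 1$). Given a row $i_0$ with $r_{i_0}=k_1 < k_2 \le \ell-1 < \ell \le r_\ell$, the endpoints $r_{i_0}, r_{i_0+1}, \ldots, r_\ell$ climb from below $k_2$ to above $k_2$ while never increasing by more than $1$ in a single step; a discrete intermediate-value argument then forces $r_{i^*}=k_2$ at the first index $i^*$ where the endpoints reach $k_2$, so $e(k_2)\ge 1$, contradicting the gap. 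This eliminates the forbidden valley and finishes the reduction. I would expect the bookkeeping near the boundary ($m$ close to $\ell$, and checking the telescoped difference formula) to be the only delicate routine part, while the intermediate-value step is the conceptual heart of the argument.
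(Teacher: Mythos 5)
Your proof is correct, and it diverges from the paper's at the key step, so it is worth comparing. Both arguments rest on the reduction \eqref{eq:rank 1}: you derive it from scratch by computing $\inv$ of a one-rook placement at $(a,b)$ as $|\lambda|-(a+b-1)$ (which is right, and slightly more self-contained than the paper, which cites \cite{cotardo2023diagonalsferrersdiagram}), so in each case the problem becomes unimodality of the diagonal counts $\bigl(d_m(\lambda)\bigr)_m$. There the methods differ. The paper takes $m$ maximal with $d_m = m$ --- so $d_k = k$ for all $k \le m$, since a full $m$th diagonal of a Ferrers board forces all earlier diagonals to be full --- and then exhibits an explicit injection from the $k$th diagonal to the $(k-1)$st for every $k > m$, routing cells around a fixed missing box $(a, m+2-a)$ of the $(m+1)$st diagonal; this describes the whole profile at once (rise $1, 2, \ldots, m$, then weak decrease). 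You instead compute the exact first differences $d_m - d_{m-1} = \mathbf{1}[m \le \ell] - e(m-1)$ via the row endpoints $r_i = i + \lambda_i - 1$, and exclude a strict valley by a discrete intermediate-value argument using $r_{i+1} \le r_i + 1$ and $r_\ell \ge \ell$. The delicate points all check out: the identity $d_m = \min(m,\ell) - \#\{i : r_i \le m-1\}$ is valid because $r_i \le m-1$ already forces $i \le \min(m,\ell)$; a strict decrease preceding a strict increase must occur at steps $m_1 < m_2 \le \ell$, giving $e(m_1-1) \ge 2$ and $e(m_2-1) = 0$; and the first index $i^* > i_0$ with $r_{i^*} \ge k_2$ satisfies $r_{i^*} \le r_{i^*-1} + 1 \le k_2$, hence $r_{i^*} = k_2$, yielding the contradiction. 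The trade-off: the paper's injection is shorter and more pictorial, while your difference formula is more quantitative, since it locates every strict rise and fall of the diagonal sequence (not just the existence of a peak) in terms of the multiplicities $e(k)$ of row endpoints.
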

\begin{proof}
It is easy to see \cite[proof of Cor.~3.2]{cotardo2023diagonalsferrersdiagram} that 
\begin{equation}\label{eq:rank 1}
R_1(\lambda; q) = \sum_{i} d_i(\lambda) \cdot q^{|\lambda| - i}
\end{equation}
where $d_i$ is as in \eqref{eq:diagonals} and $|\lambda|=\lambda_1+\lambda_2+\cdots + \lambda_{\ell}$ is the \defn{size} of $\lambda$.
This polynomial is unimodal because, taking $m$ to be the maximum index for which $d_m = m$ and fixing a choice of a box $(a, m + 2 - a)$ that does \emph{not} belong to the $(m + 1)$st diagonal of $B_\lambda$, we have that the map 
\[
(x, y) \mapsto \begin{cases} (x, y - 1) & \text{ if } x < a, \\ (x - 1, y) & \text{ if } x > a
\end{cases}
\]
is an injection from the $k$th diagonal of $B_\lambda$ to the $(k - 1)$st for all $k > m$, and hence $d_m \geq d_{m + 1} \geq \cdots$.
\end{proof}

Progress on the general conjecture has been limited.  One reason for this is that, although $q$-rook numbers of maximum rank are always log-concave (per the proof of Proposition~\ref{prop:full rank}), this is not the case for lower-rank $q$-rook numbers, as shown in the next example.  Thus, the assorted machinery that has been built up (especially in the past decade \cite{AHK,BH,AGSVI,ChanPak}) to prove log-concavity results is not directly useful here.
\begin{example}
One has (either from \eqref{eq:rank 1} or by direct examination) that 
\[
R_1(\langle 4,1\rangle;q) = q^4+2q^3+q^2+q, 
\]
which is not log-concave (since $a_2^2 = 1 < 2 = a_1 \cdot a_3$). 
\end{example}

A second reason that progress on the general conjecture has been limited is that the conjecture is false.
\begin{proposition}\label{main prop}
The $q$-rook number
\[
R_2(\langle 10, 9, 3, 2, 1\rangle; q)
\]
is not unimodal.
\end{proposition}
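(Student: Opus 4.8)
The plan is to prove the statement by direct computation: I will produce the polynomial $R_2(\langle 10,9,3,2,1\rangle; q)$ explicitly and then point to three of its coefficients that form a strict interior valley, which is incompatible with unimodality. The underlying observation is elementary: a sequence $(a_v)$ cannot be unimodal once there are indices $i < j < k$ with $a_i > a_j$ and $a_j < a_k$, since a unimodal sequence with peak index $m$ would force $a_i \le a_j$ when $j \le m$ and $a_j \ge a_k$ when $j \ge m$, and one of these must hold. So it suffices to exhibit a single such valley.

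To organize the computation I would first record a closed form for the inversion statistic of a $2$-rook placement. If $w = \{(a_1,b_1),(a_2,b_2)\}$ with $a_1 \ne a_2$ and $b_1 \ne b_2$, then (using that $B_\lambda$ contains every cell weakly northwest of one of its cells) each rook cancels exactly $a_i + b_i - 1$ cells, and the two cancellation regions meet in a single cell precisely when $(a_1 - a_2)(b_1 - b_2) < 0$, i.e.\ when the rook in the upper row lies strictly to the right of the rook in the lower row. Writing $o(w) \in \{0,1\}$ for the indicator of this event, inclusion--exclusion gives
\[
\inv(w) = |\lambda| + 2 - (a_1 + b_1) - (a_2 + b_2) + o(w),
\]
which for $\lambda = \langle 10,9,3,2,1\rangle$ becomes $\inv(w) = 27 - (a_1 + b_1) - (a_2 + b_2) + o(w)$; the single-rook cancellation count $a + b - 1$ is consistent with the diagonal description in \eqref{eq:rank 1}.

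With this formula the enumeration is a finite, mechanical tally. I would group the placements by which pair of rows $\{r, r'\}$ they occupy (there are $\binom{5}{2} = 10$ pairs) and, for each pair, sum $q^{\inv(w)}$ over the admissible column choices; each contribution is a short explicit polynomial. Adding the ten contributions yields
\begin{align*}
R_2(\langle 10,9,3,2,1\rangle; q) ={}& q^{6} + 2q^{7} + 3q^{8} + 4q^{9} + 5q^{10} + 12q^{11} + 17q^{12} + 20q^{13} + 21q^{14} \\
&{} + 20q^{15} + 22q^{16} + 21q^{17} + 18q^{18} + 13q^{19} + 7q^{20} + 3q^{21} + q^{22}.
\end{align*}
As a check against arithmetic slips I would confirm that the coefficients sum to $R_2(B_\lambda) = 190$, a quantity computable independently from $\lambda$ and its conjugate $\lambda' = \langle 5,4,3,2,2,2,2,2,2,1\rangle$. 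The coefficients of $q^{14}, q^{15}, q^{16}$ are $21$, $20$, $22$, so $a_{14} > a_{15} < a_{16}$ is exactly the strict valley sought, and the polynomial is not unimodal.

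Since the computation is routine, the real difficulty is not verification but finding so small a counterexample. Conceptually, the board splits into a wide part (the two rows of lengths $10$ and $9$) and a small staircase part (rows of lengths $3, 2, 1$): placements inside the wide part form a single hump symmetric about $q^{14}$, hence strictly decreasing over $q^{14}, q^{15}, q^{16}$; the mixed placements contribute the same mass at $q^{15}$ and $q^{16}$; and placements meeting the small part switch on only at $q^{16}$. Thus $a_{14} > a_{15}$ because the wide hump alone decays, while $a_{16} > a_{15}$ because the small-part jump at $q^{16}$ outweighs that decay, producing the valley. I expect the only point needing genuine care in the write-up is the overlap term $o(w)$, since a sign error there would redistribute mass among neighboring powers and could spuriously create or destroy the valley; the independent count $R_2(B_\lambda) = 190$ guards against gross errors.
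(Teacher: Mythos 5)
Your proposal is correct, and it reaches the paper's conclusion by a genuinely different route. The paper computes $R_2(\langle 10, 9, 3, 2, 1\rangle; q)$ top-down, by three applications of the deletion--contraction recurrence \eqref{eq:recurrence}, reducing everything to $R_2(\langle 10, 9\rangle; q) = ([9]_q)^2$ via \eqref{eq:full rank} and to rank-one values via \eqref{eq:rank 1}; you instead work bottom-up, deriving the closed formula $\inv(w) = |\lambda| + 2 - (a_1 + b_1) - (a_2 + b_2) + o(w)$ by inclusion--exclusion on the cancelled cells and summing directly over the ten row pairs. Your overlap analysis is right: each rook alone cancels $a_i + b_i - 1$ cells, and (taking $a_1 < a_2$) the two cancelled regions meet exactly in the single cell $(a_1, b_2)$ precisely when $b_2 < b_1$; one small point worth making explicit in a final write-up is that this cell does lie in $B_\lambda$, since $b_2 < b_1 \leq \lambda_{a_1}$. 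Your polynomial agrees coefficient-for-coefficient with the paper's, the total $R_2(B_\lambda) = 190$ checks out, and the valley $a_{14} = 21 > a_{15} = 20 < a_{16} = 22$ is the same one the paper exhibits, with the correct logic that a strict interior valley defeats unimodality. Note that, like the paper's ``plugging these values in and simplifying,'' your write-up asserts the final polynomial rather than displaying the ten row-pair contributions, so the two proofs are at the same level of mechanical trust. What the paper's route buys is scalability: the recurrence \eqref{eq:recurrence} is exactly what the authors implemented for the exhaustive search establishing minimality of the example. What yours buys is an explanation the paper does not give: your closing decomposition is in fact exactly right --- placements within rows $\{1,2\}$ contribute $q^6 ([9]_q)^2$, symmetric about $q^{14}$ with contributions $9, 8, 7$ at degrees $14, 15, 16$; mixed placements contribute a flat $12, 12, 12$ there; and placements with both rooks among rows $\{3,4,5\}$ (which occupy degrees $16$ through $18$) contribute $0, 0, 3$ --- so the valley at $q^{15}$ is precisely the decay of the wide hump losing to the small staircase switching on, a structural account that could plausibly guide the search for higher-rank counterexamples.
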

\begin{proof}
By considering whether or not the cell $(\ell, \lambda_\ell)$ has a rook, it is easy to see that the $q$-rook numbers satisfy the recurrence relation
\begin{multline}\label{eq:recurrence}
R_k(\langle \lambda_1, \ldots, \lambda_{\ell - 1}, \lambda_\ell \rangle ; q) = q \cdot R_k(\langle \lambda_1, \ldots, \lambda_{\ell - 1}, \lambda_\ell - 1 \rangle ; q) + {}\\ R_{k - 1}(\langle \lambda_1 - 1, \ldots, \lambda_{\ell - 1} - 1 \rangle ; q).
\end{multline}
(This is a special case of Dworkin's ``deletion-contraction" recurrence for $q$-rook numbers \cite[Thm.~6.10]{dworkin_interpretation_1998}.) Repeated applications of \eqref{eq:recurrence} yield
\begin{align*}
R_2(\langle 10, 9, 3, 2, 1\rangle; q) & = q \cdot R_2(\langle 10, 9, 3, 2\rangle; q) + R_1(\langle 9, 8, 2, 1\rangle; q), \\
R_2(\langle 10, 9, 3, 2\rangle; q) & = q^2 \cdot R_2(\langle 10, 9, 3\rangle; q) + (1 + q) \cdot R_1(\langle 9, 8, 2\rangle; q), \\ \intertext{and}
R_2(\langle 10, 9, 3\rangle; q) & = q^3\cdot R_2(\langle 10, 9\rangle; q) + (1 + q + q^2) \cdot R_1(\langle 9, 8\rangle; q).
\end{align*}
By \eqref{eq:full rank}, we have $R_2(\langle 10, 9\rangle; q) = ([9]_q)^2$, while the various instances of $R_1$ can be computed from \eqref{eq:rank 1}.  Plugging these values in and simplifying produces
\begin{multline*}
R_2(\langle 10, 9, 3, 2, 1\rangle; q) = q^{22} + 3q^{21} + 7q^{20} + 13q^{19} + 18q^{18} + 21q^{17} +
{}\\
22q^{16} + 20q^{15} + 21q^{14} + 
{}\\
20q^{13} + 17q^{12} + 12q^{11} + 5q^{10} + 4q^{9} + 3q^{8} + 2q^{7} + q^6,
\end{multline*}
which is manifestly non-unimodal.
\end{proof}

\begin{remark}
The example in Proposition~\ref{main prop} is the smallest partition that has a non-unimodal $q$-rook number: we checked by exhaustive computation (carried out in {\tt Sage} \cite{sagemath}, implemented with the simple recursion \eqref{eq:recurrence}, with base case $R_0(\lambda; q)=q^{|\lambda|}$) that if $|\lambda| \leq 24$, then the $q$-rook number $R_k(\lambda; q)$ is unimodal for all $k$.  The only other partitions of size $25$ with a non-unimodal $q$-rook number are rook equivalent to $\langle 10, 9, 3, 2, 1\rangle$.  For larger $n$, the number of partitions of $n$ with non-unimodal $q$-rook number in rank $2$ grows steadily; for example, there are $67$ rook equivalence classes of partitions of size $40$ with $R_2$ non-unimodal (out of $1113$ total classes).
\end{remark}

\begin{remark}
We record here the results of some further computational experiments:
\begin{itemize}
    \item The smallest partitions $\lambda$ for which $R_3(\lambda; q)$ is not unimodal are $\langle 14, 13, 5, 4, 3, 2, 1 \rangle$, of size $42$, and the partitions rook equivalent to it.  These partitions are also non-unimodal in rank $2$.
    \item The partitions $\lambda$ of size $43$ for which $R_3(\lambda; q)$ is not unimodal are $\langle 16, 12, 5, 4, 3, 2, 1 \rangle$ and the partitions rook equivalent to it.  These partitions \emph{are} unimodal in rank $2$.
    \item The smallest partitions $\lambda$ for which $R_4(\lambda;q)$ is not unimodal are $\langle 31, 17, 16, 5, 4, 3, 2\rangle$, of size $78$, and the partitions rook equivalent to it. These partitions are also non-unimodal in rank $2$ but are unimodal in rank $3$.
    \item  For the partition $\langle 69, 67, 65, 10, 9, 7, 6, 5, 4, 3, 2\rangle$ of size $247$, $R_5(\lambda; q)$ is not unimodal. This partition was found doing an exploratory search with code written with the help of {\tt Gemini} \cite{gemini}.  We have no reason to believe this is a minimum-size example, but we have not found a smaller one. See Table~\ref{table: non-unimodal examples} for similar non-unimodal examples for $k=6,7,8,9,10$. The code for these explorations is available on the {\tt arXiv} as an ancillary file and  a website with a visualization tool for  $q$-rook numbers is available at \url{https://ahmorales.math.uqam.ca/Animations/qrooks.html}.
\end{itemize}

\begin{table}[h]
\centering
\begin{tabular}{cp{11cm}r}
\hline
$k$ & Partition $\lambda$  & size \\ \hline
2 & $\langle10, 9, 3, 2, 1 \rangle$  & 25 \\ 
3 & $\langle 14, 13, 5, 4, 3, 2, 1\rangle$ & 42 \\ 
4 & $\langle 31, 17, 16, 5, 4, 3, 2 \rangle$ & 78\\ 
5 & $\langle 69, 67, 65, 10, 9, 7, 6, 5, 4, 3, 2 \rangle$ & 247 \\ 
6 & $\langle 67^2, 14^{11}, 13, 12, 11, 10, 9, 8, 7, 6, 5, 4, 3, 2, 1 \rangle$ & 379\\
7 & $\langle 78^2, 19^4, 18^6, 17, 16, 15, 14, 13, 12, 11, 10, 9, 8, 7, 6, 5, 4, 3, 2, 1 \rangle$  & 494\\ 
8 & $\langle$122, 119, 31, 30, 29, 28, 27, 26, 25, 24, 23, 22, 21, 20, 19, 18, 17, 16, 15, 14, 13, 12, 11, 10, 9, 8, 7, 6, 5, 4, 3, 2, 1$\rangle$ & 785 \\
9 & $\langle216^2, 39, 38^2, 37, 34^4, 33^2, 32^2, 30^4, 29, 28, 24^2, 22, 20^2, 19, 18$, $15^4, 13^2, 1^4\rangle$ & 1176  \\ 
10 & $\langle244^2, 66, 65, 62, 59, 57^3, 56^4, 54, 43, 42^2, 40, 39, 35, 34^2, 21^3, 20^5,$  $18,16^2, 13^2, 12^4, 9^5, 1^{14}\rangle$ & 1801 \\ \hline
\end{tabular}

\caption{Examples of partitions $\lambda$ such that $R_k(\lambda;q)$ is not unimodal for $k=2,\ldots,10$. The examples for $k=2,3,4$ are minimum-size examples, while those for $k=5,\ldots,10$ were found doing a exploratory search with code written with the help of {\tt Gemini} \cite{gemini} based on the heuristic of {\tt PatternBoost} \cite{PatternBoost}. }
\label{table: non-unimodal examples}

\end{table}

It seems reasonable to conjecture that for each integer $k > 1$, there is some partition $\lambda$ for which $R_k(\lambda; q)$ is not unimodal.
Typically, it seems that when $R_k(\lambda; q)$ is not unimodal, there is a gap between $k$ and the number of parts $\ell(\lambda)$ of $\lambda$.  For example, every partition $\lambda$ with less than $100$ boxes for which $R_2(\lambda; q)$ is non-unimodal has $\ell(\lambda) \geq 5$.  The smallest example with a gap of size $2$ is for the non-unimodal polynomial $R_3(\langle 24, 15, 14, 3, 2 \rangle; q)$.  Is $R_{\ell-1}(\lambda;q)$ always unimodal?
\end{remark}
\begin{remark}
In the tables of data  they collected in \cite[p.~273--274]{GarsiaRemmel}, Garsia and Remmel noted that the coefficients of the \defn{total $q$-rook number} \[
R(\lambda; q) := \sum_{k \geq 0} R_k(\lambda; q)
\] 
also appeared to be unimodal. We have verified this for all partitions $\lambda$ such that $|\lambda| \leq 70$. For the staircase $\delta_n$, $R(\delta_n;q)$ is the \emph{$q$-Bell number} \cite{wagner2004partition}. Note that $R(\lambda;q)$ is not necessarily log-concave since $R(\langle 2,1\rangle;q)= 1+2q+q^2+q^3$.
\end{remark}

\begin{remark} \label{rem:q stir 1}
The $q$-Stirling numbers $S_q(n,k)$ of the second kind, defined by $S_q(n, k):=R_{n-k}(\delta_n;q)$, or alternatively by the recurrence
\begin{equation} \label{eq:recurence q Stirling}
S_q(n,k) =  q^{k-1} S_q(n-1,k-1) + [k]_q S_q(n-1,k)  \quad\text{for}\quad 0<k \leq n, \qquad S_q(0,k) = \delta_{0,k},
\end{equation}
were further conjectured to be log-concave by Wachs--White \cite[p.~45]{WACHS199127}. This conjecture was discussed more recently in work of Sagan and Swanson \cite[Conj.~7.4]{SAGAN2024103899}, who verified it for $1\leq k\leq n \leq 50$. We have checked it  for $1\leq k \leq n \leq 250$. See \cite[\S 1.2]{SAGAN2024103899} for a history of these polynomials.

It follows easily from the recurrence~\eqref{eq:recurence q Stirling} that these $q$-Stirling numbers can be obtained from the following evaluation of a {\em complete symmetric polynomial} \cite{Gould}:
\[
S_q(n,k) = q^{\binom{k}{2}}h_{n-k}([1]_q,[2]_q,\ldots,[k]_q).
\]
Such complete symmetric functions are instances of Schur functions: $h_{n-k}(x_1,\ldots,x_k) = s_{n-k}(x_1,\ldots,x_k)$. 
Could the evaluation above be useful to show log-concavity or unimodality of $S_q(n,k)$, as in the {\em  principal specialization} \cite[\S 7.8]{EC2} of Schur functions \cite[Thm.~13]{StanleyLogConcave}, or by exploiting the {\em denormalized Lorentzian property} of Schur polynomials \cite{HMMStD}?
\end{remark}

\begin{remark}
For all partitions $\lambda$ of size $|\lambda|\leq 75$ and all $k$, the rank-$k$ $q$-rook number $R_k(B_{\lambda};q)$ is log-concave whenever $R_1(B_{\lambda};q)$ is log-concave (i.e., if the sequence of diagonal lengths $d_i(\lambda)$ are log-concave).  Could this be true for all $\lambda$?  Note that $R_1(\delta_n;q)$ is log-concave, so this is a strengthening of the Wachs--White conjecture mentioned in the previous remark.    
\end{remark}

\begin{remark} \label{rem:q stir 2}
In \cite[Cor.~2.6]{SAGAN1992289}, Sagan showed that the $q$-Stirling numbers of the second kind satisfy the following alternative notion of log-concavity:\footnote{See also \cite{LEROUX199064} for an analogous result for the shifted polynomials $q^{-\binom{k}{2}}S_q(n,k)$.}
\[
S_q(n,k)^2-S_q(n,k-1)S_q(n,k+1) \in \mathbb{N}[q].
\]
Based on calculations for partitions $\lambda$ such that  $|\lambda|\leq 50$, we conjecture that the $q$-rook numbers $R_k(\lambda;q)$ satisfy this notion of log-concavity, i.e., that
\[
R_k(\lambda; q)^2-R_{k - 1}(\lambda; q)R_{k + 1}(\lambda; q) \in \mathbb{N}[q]
\]
for all $k$ and $\lambda$.  At $q = 1$, this conjecture is a consequence of \cite[Thm.~1]{Nijenhuis}; see \cite{HOW} for more about log-concavity for the $q = 1$ case. 
\end{remark}

\subsection*{Acknowledgements}

The second author thanks Jim Haglund for first telling us and Bruce Sagan for reminding us about the question of the unimodality of the Garsia--Remmel $q$-rook numbers. We also thank Jonathan Leake for helpful discussions on Lorentzian polynomials.

The first author thanks Swee Hong Chan, Hong Chen, Guilherme Zeus Dantas e Moura, and Elizabeth Mili\'cevi\'c for their scintillating dinner conversation at the Mid-Atlantic Algebra, Geometry, and Combinatorics Workshop (MAAGC 2024).  In particular, we thank Swee Hong for pointing out (in response to a remark by the first author) that although it is obviously \emph{possible} to compute many thousands or millions of $q$-rook numbers to test the conjecture, it is not clear whether anyone \emph{actually had done so}.

The first author was partially supported by Simons Foundation gift
MPS-TSM-00006960.  The second author acknowledges the support of the Natural Sciences and Engineering Research Council of Canada (NSERC) funding reference number RGPIN-2024-06246, and was partially supported by the NSF grant DMS-2154019.

Lastly, the authors have no relevant financial or non-financial competing interests to report.

\printbibliography



\end{document}